\newtheorem{theorem}{Theorem}[section]
\newtheorem{lemma}[theorem]{Lemma}
\theoremstyle{definition}
\newtheorem{definition}[theorem]{Definition}
\newtheorem{remark}[theorem]{Remark}
\begin{document}

\title{More or Less Uniform Convergence}
\author{Henry Towsner}
\date{\today}
\thanks{Partially supported by NSF grant DMS-1600263}
\address {Department of Mathematics, University of Pennsylvania, 209 South 33rd Street, Philadelphia, PA 19104-6395, USA}
\email{htowsner@math.upenn.edu}
\urladdr{\url{http://www.math.upenn.edu/~htowsner}}

\begin{abstract}
Uniform metastable convergence is a weak form of uniform convergence for a family of sequences.  In this paper we explore the way that metastable convergence stratifies into a family of notions indexed by countable ordinals.

We give two versions of this stratified family; loosely speaking, they correspond to the model theoretic and proof theoretic perspectives.  For the model theoretic version, which we call abstract $\alpha$-uniform convergence, we show that uniform metastable convergence is equivalent to abstract $\alpha$-uniform convergence for some $\alpha$, and that abstract $\omega$-uniform convergence is equivalent to uniformly bounded oscillation of the family of sequences.

The proof theoretic version, which we call concrete $\alpha$-uniform convergence, is less canonical (it depends on a choice of ordinal notation), but appears naturally when ``proof mining'' convergence proofs to obtain quantitative bounds.

We show that these hierarchies are strict by exhibiting a family of which is concretely $\alpha+1$-uniformly convergent but not abstractly $\alpha$-uniformly convergent for each $\alpha<\omega_1$.
\end{abstract}

\maketitle

\section{Introduction}

Our goal in this paper is to collect up the relationships between some notions of uniform convergence.  The notion of metastable convergence introduced in \cite{tao08Norm,avigad:MR2550151} (and earlier studied in \cite{MR2144170,MR2130066}) can be seen as a family of notions indexed by ordinals, with full metastable convergence corresponding to the $\omega_1$ level, and the notion of bounded oscillation corresponding to the $\omega$ level, with other notions in between.  While this idea has appeared implicitly in the literature \cite{gaspar,MR3643744}, the details have not been made explicit.

We define the general family of notions of abstract $\alpha$-uniform convergence and show that uniform metastable convergence is equivalent to abstract $\alpha$-uniform convergence for some $\alpha<\omega_1$.  We introduce another, slightly stronger notion, concrete $\alpha$-uniform convergence, which has the benefit of being more explicit but the disadvantage of depending on explicit representations of ordinals (specifically, fixed sequences $\alpha_n$ so that $\alpha=\sup_n (\alpha_n+1)$ for each $\alpha$).  Finally, we introduce families of sequences $\mathcal{S}_\alpha$ (the sequences which ``change value $\alpha$ times'') and show that each $\mathcal{S}_\alpha$ is concretely $\alpha+1$-uniformly convergent but not abstractly $\alpha$-uniformly convergent.

\section{Uniform Metastable Convergence}

Throughout this paper, we will focus on $\{0,1\}$-valued sequences, and we write $\bar a$ for the sequence $(a_n)_{n\in\mathbb{N}}$.  The ideas generalize to sequences valued in any complete metric space, and we will occasionally discuss this generalization in remarks.

We are interested in sets $\mathcal{S}$ of sequences such that every $\bar a\in\mathcal{S}$ converges.  In particular, we are interested in questions about the uniformity of this convergence.  The classic notion of uniform convergence---that there is some fixed $m$ so that, for every $\bar a\in\mathcal{S}$, if $m,m'\geq n$ then $a_m=a_{m'}$---is quite strong.  (Indeed, in our restricted setting of $\{0,1\}$-valued sequences, it is easy to see that any uniformly convergent set of sequences is finite.)

The following, weaker notion, is in some sense the weakest reasonable notion of uniformity.
\begin{definition}
    Let $\mathcal{S}$ be a set of sequences.  We say $\mathcal{S}$ \emph{converges uniformly metastably} if for every $F:\mathbb{N}\rightarrow\mathbb{N}$ such that $n<F(n)$ and $F(n)\leq F(n+1)$ for all $n$, there exists an $M_F$ so that, for every $\bar a\in\mathcal{S}$, there is an $m\leq M_F$ and a $c\in\{0,1\}$ so that for all $n\in[m,F(m)]$, $a_n=c$.
\end{definition}
This notion has also been called \emph{local stability} \cite{avigad:MR2550151}: it says that we can find very long intervals on which the sequence $\bar a$ has stabilized, where the length of the interval can even depend on how large the starting point of the interval is (that is, the interval has the form $[m,F(m)]$, where $F$ could grow very quickly as a function of $m$).

A variety of results \cite{tao08Norm, avigad:MR2550151,MR3278188,MR2563097,2016arXiv161005397C,MR3141811,iovino_duenez} have shown that, under some assumptions on $\mathcal{S}$, if every sequence in $\mathcal{S}$ converges then actually $\mathcal{S}$ must converge uniformly metastability.  These hypotheses are usually given in terms of logic, but in this simple setting a direct formulation is possible.

\begin{definition}
  Suppose that, for each $i$, $\bar a^i=(a_n^i)_{n\in\mathbb{N}}$ is a sequence.   A \emph{limit sequence} is a sequence $\bar b=(b_n)_{n\in\mathbb{N}}$ such that there is an infinite set $S$ so that, for each $n$, $\{i\in S\mid a^i_n\neq b_n\}$ is finite.
\end{definition}

For example, suppose that $a_n^i=\left\{\begin{array}{ll}1&\text{if }i<n\\0&\text{otherwise}\end{array}\right.$.  Then the only limit sequence is the sequence which is constantly equal to $0$.

More generally, if $\lim_{i\rightarrow\infty}a^i_n$ converges to a value $c$ then, in any limit sequence $(b_n)$, $b_n=c$.  When $\lim_{i\rightarrow\infty}a^i_n$ fails to converge, there must be multiple limits, including at least one where $b_n=0$ and one where $b_n=1$; the requirement that there is a single set $S$ is a coherence condition.

For example, if $a_n^i=\left\{\begin{array}{ll}1&\text{if }2^n\mid i\\0&\text{otherwise}\end{array}\right.$ then the possible limits are the sequence that is all $1$'s, or any sequence consisting of a finite, positive number of $1$'s followed by $0$'s: the choice where there are $k\geq 1$ $1$'s followed by $0$'s corresponds to take $S$ to be, for example, the numbers divisible by $2^{k-1}$ but not $2^k$; the limit which is constantly $1$ corresponds to taking $S$ to be, for instance, $\{1,2,4,8,16,\ldots\}$.

\begin{lemma}
  For any collection of sequences $\bar a^i$, there exists a limit sequence.
\end{lemma}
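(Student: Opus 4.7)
The plan is to build the limit sequence by iterated pigeonhole followed by diagonalization. I want an infinite set $S$ and values $b_n\in\{0,1\}$ such that for each fixed $n$, only finitely many $i\in S$ satisfy $a^i_n\neq b_n$. The obstacle is that the single set $S$ has to work simultaneously for every coordinate $n$, so a straightforward one-step pigeonhole is not enough; I will handle this by constructing nested witnesses and then diagonalizing.

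First I would construct a decreasing chain of infinite sets $\mathbb{N}=S_{-1}\supseteq S_0\supseteq S_1\supseteq\cdots$ together with values $b_n\in\{0,1\}$, as follows. Given $S_{n-1}$ infinite, the function $i\mapsto a^i_n$ takes only two values on $S_{n-1}$, so by the pigeonhole principle there is some $b_n\in\{0,1\}$ such that the set $S_n:=\{i\in S_{n-1}\mid a^i_n=b_n\}$ is infinite. This defines $\bar b=(b_n)$ and a nested family $(S_n)$ with the property that $a^i_n=b_n$ for every $i\in S_n$.

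Second, I would diagonalize through the chain: pick $i_0<i_1<i_2<\cdots$ with $i_n\in S_n$ (possible because each $S_n$ is infinite), and set $S:=\{i_0,i_1,i_2,\ldots\}$. Fix any coordinate $n$. For every $k\geq n$ we have $i_k\in S_k\subseteq S_n$, so $a^{i_k}_n=b_n$. Hence $\{i\in S\mid a^i_n\neq b_n\}\subseteq\{i_0,\ldots,i_{n-1}\}$, which is finite. This is exactly the definition of a limit sequence, so $\bar b$ together with $S$ witnesses the claim.

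I do not anticipate a serious obstacle here; the main conceptual point is simply that one cannot take the infinite witnesses $S_n$ directly (they need not have infinite intersection), which is why the diagonal selection step is necessary. The argument uses only that the value set $\{0,1\}$ is finite, so it adapts verbatim to any finite-valued sequences; for sequences in a general complete metric space one would replace pigeonhole by iterated passage to convergent subsequences, which is the remark the author alludes to.
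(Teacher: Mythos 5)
Your proof is correct. The paper in fact omits the proof of this lemma, treating it as routine; your nested-pigeonhole-plus-diagonalization argument is exactly the standard proof (equivalently, the sequential compactness of $\{0,1\}^{\mathbb{N}}$), and your closing remark about the metric-space generalization matches the paper's remark about ultraproducts.
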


\begin{remark}
  In the more general setting \cite{MR3141811}, the notion of a limit sequence is replaced by an \emph{ultraproduct}.  This not only allows consideration of sequences valued in arbitrary metric spaces, it includes the case where different sequences come from different metric spaces.
\end{remark}

\begin{definition}
  We say a sequence $\bar a=(a_n)_{n\in\mathbb{N}}$ \emph{converges} if there is some $m$ and some $c\in\{0,1\}$ so that for all $n\geq m$, $a_n=c$.

  Let $\mathcal{S}$ be a set of sequences.  We say $\mathcal{S}$ \emph{converges uniformly metastably} if for every $F:\mathbb{N}\rightarrow\mathbb{N}$, there exists an $M_F$ so that, for every $\bar a\in\mathcal{S}$, there is an $m\leq M_F$ and a $c\in\{0,1\}$ so that for all $n\in[m,F(m)]$, $a_n=c$.
\end{definition}

\begin{theorem}
  Suppose $\mathcal{S}$ is a set of sequences such that:
  \begin{itemize}
  \item every sequence in $\mathcal{S}$ converges, and
  \item whenever $\{\bar a^i\}_{i\in\mathbb{N}}\subseteq\mathcal{S}$, every limit sequence of $\bar a^i$ is also in $\mathcal{S}$.
  \end{itemize}
Then $\mathcal{S}$ converges uniformly metastably.
\end{theorem}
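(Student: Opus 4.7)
The plan is to proceed by contradiction, using the closure-under-limit-sequences hypothesis to extract a single bad sequence from a family of sequences witnessing the failure of uniform metastability.

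First I would assume $\mathcal{S}$ does not converge uniformly metastably and unwind the negation: there exists $F:\mathbb{N}\to\mathbb{N}$ such that for every $M\in\mathbb{N}$ there is some $\bar a^M\in\mathcal{S}$ for which every interval $[m,F(m)]$ with $m\leq M$ contains both a $0$ and a $1$. This produces a family $\{\bar a^M\}_{M\in\mathbb{N}}\subseteq\mathcal{S}$ indexed by $\mathbb{N}$, which is exactly the input shape needed for the previous lemma guaranteeing the existence of a limit sequence.

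Next I would let $\bar b$ be a limit sequence of $\{\bar a^M\}_{M\in\mathbb{N}}$, witnessed by an infinite set $S\subseteq\mathbb{N}$. By the closure hypothesis, $\bar b\in\mathcal{S}$, so by the convergence hypothesis there is some $m_0$ and some $c\in\{0,1\}$ with $b_n=c$ for all $n\geq m_0$; in particular $b_n=c$ for every $n$ in the finite interval $[m_0,F(m_0)]$. The definition of limit sequence gives, for each such $n$, a finite exceptional set $E_n=\{M\in S\mid a^M_n\neq b_n\}$; since $[m_0,F(m_0)]$ is finite, the union $E=\bigcup_{n\in[m_0,F(m_0)]}E_n$ is still finite, so $S\setminus E$ is infinite.

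Finally I would pick any $M\in S\setminus E$ with $M\geq m_0$. By construction $a^M_n=c$ for every $n\in[m_0,F(m_0)]$, so the interval $[m_0,F(m_0)]$ is monochromatic in $\bar a^M$. But $m_0\leq M$, so by the choice of $\bar a^M$ this interval must contain both values, a contradiction. The main conceptual step is the extraction of $\bar b$ and its membership in $\mathcal{S}$; the only technical care needed is the double use of ``finitely many exceptions'' (once per $n$, then aggregated over the finite interval), which is routine.
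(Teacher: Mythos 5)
Your proposal is correct and follows essentially the same approach as the paper's own proof: negate uniform metastability to get a family $\{\bar a^M\}$, take a limit sequence $\bar b$, use closure and pointwise convergence of $\bar b$, and then find a witness $M\in S$ large enough to contradict the choice of $\bar a^M$. You merely spell out the finite-union argument (the sets $E_n$) slightly more explicitly than the paper, which compresses it into ``the set of $i\in S$ such that \ldots must be infinite.''
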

\begin{proof}
  Suppose $\mathcal{S}$ does not converge uniformly metastably; then there is an $F:\mathbb{N}\rightarrow\mathbb{N}$ so that, for each $i$, there is a $\bar a^i\in\mathcal{S}$ such that, for every $m\leq i$, there are $n_0,n_1\in[m,F(m)]$ so that $a_{n_0}=0$ and $a_{n_1}=1$.

  Let $\bar b$ be some limit of the $\bar a^i$.  Then $\bar b\in\mathcal{S}$, so $\bar b$ converges.  So there is some $m$ and some $c$ so that, for all $n\geq m$, $b_n=c$.

  Choose an infinite set $S$ so that, for all $n$, $\{i\in S\mid a^i_n\neq b_n\}$ is finite.  In particular, the set of $i\in S$ such that, for all $n\in[m,F(m)]$, $a^i_n=b_n$ must be infinite, so we can find some $i\geq m$ in $S$.  Then for every $n\in[m,F(m)]$, $a^i_n=b_n=c$.  But this contradicts the choice of $\bar a^i$.
\end{proof}

We wish to associate sets of convergent sequences $\mathcal{S}$ to ordinals.



\begin{definition}
Let $\mathcal{S}$ be a set of sequences.  We define $T_{\mathcal{S}}$ to be the tree of finite increasing sequences $0<r_1<\cdots<r_M$ such that, taking $r_0=0$, there is some $\bar a\in\mathcal{S}$ so that, for every $i<M$, there are $n_0,n_1\in[r_i,r_{i+1}]$ with $a_{n_0}=0$ and $a_{n_1}=1$.

When $\alpha<\omega_1$ is an ordinal, we say that $\mathcal{S}$ converges abstractly $\alpha$-uniformly if $T_{\mathcal{S}}$ has height strictly less than $\alpha$.
\end{definition}

\begin{remark}
  Note that, in our setting, $\mathcal{S}$ converges abstractly $\omega$-uniformly iff $\mathcal{S}$ converges abstractly $n$-uniformly for some finite $n$ (and similarly for other limit ordinals).

  When considering more general sequences, one would want consider countably many trees corresponding to fluctuations of size approaching $0$: we could consider the tree $T_{\mathcal{S},k}$ of sequences $0=r_0<r_1<\cdots<r_M$ such that there is some $\bar a\in\mathcal{S}$ so that, for every $i<M$, there are $n_0,n_1\in[r_i,r_{i+1}]$ with $|a_{n_0}-a_{n_1}|>1/k$.  Then we would say $\mathcal{S}$ converges abstractly $\alpha$-uniformly if for each $k$, $T_{\mathcal{S},k}$ has height strictly less than $\alpha$.
\end{remark}

To connect this with metastability, we need to relate these sequences $(r_i)_{1\leq i\leq M}$ to functions $F:\mathbb{N}\rightarrow\mathbb{N}$.

\begin{definition}
  Given a strictly increasing sequence $\bar r$ with $0=r_0$, we define $F_{\bar r}(n)$ by taking $i$ least so $n\leq r_i$ and setting $F_{\bar r}(n)=r_{i+1}$.

For any function $F:\mathbb{N}\rightarrow\mathbb{N}$ such that $n<F(n)$ and $F(n)\leq F(n+1)$ for all $n$, we define a function $\hat F=F_{(F^{i}(0))_{i\in\mathbb{N}}}$.
\end{definition}

\begin{lemma}
  For any $F:\mathbb{N}\rightarrow\mathbb{N}$ such that $n<F(n)$ and $F(n)\leq F(n+1)$ for all $n$, $F(n)\leq \hat F(n)$ for all $n$.
\end{lemma}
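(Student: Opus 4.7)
The plan is to unpack the definitions and then apply monotonicity of $F$. Write $r_i = F^i(0)$, so that $r_0 = 0$ and $r_{i+1} = F(r_i)$. Since $F(k) > k$ for every $k$, the sequence $(r_i)$ is strictly increasing, and in fact $r_i \geq i$, so for every $n$ there exists some $i$ with $n \leq r_i$.

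Given $n$, let $i$ be the least index with $n \leq r_i$. By the definition of $F_{\bar r}$ applied to the sequence $(F^i(0))$, we have
\[
\hat F(n) = r_{i+1} = F(r_i).
\]
Since $n \leq r_i$ and $F$ is non-decreasing, it follows that $F(n) \leq F(r_i) = \hat F(n)$, which is what we want.

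The only case worth a brief check is $i = 0$, which forces $n = 0$; then $\hat F(0) = r_1 = F(0)$, and the inequality $F(0) \leq \hat F(0)$ is trivial. There is no real obstacle here: the lemma is essentially a bookkeeping statement that replacing $n$ by the next iterate $r_i = F^i(0)$ above $n$ only increases the value of $F$, so the ``canonical'' function $\hat F$ dominates $F$ pointwise.
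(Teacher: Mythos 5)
Your proof is correct and follows essentially the same route as the paper's: pick the least $i$ with $n \leq r_i = F^i(0)$, observe $\hat F(n) = r_{i+1} = F(r_i)$, and conclude by monotonicity of $F$ from $n \leq r_i$. The extra remarks about $(r_i)$ being strictly increasing and the $i=0$ case are harmless bookkeeping the paper leaves implicit.
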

\begin{proof}
  Let $r_i=F^i(0)$ for all $i$, so $\hat F=F_{\bar r}$.

Let $i$ be least so that $n\leq r_i=F^i(0)$.  Then $\hat F(n)=F^{i+1}(0)=F(F^i(0))\geq F(n)$.

\end{proof}

Note that if $M_{\hat F}$ witnesses metastability for $\hat F$ then it also witnesses metastability for $F$; in particular, this means that if we can show metastability for all functions of the form $F_{\bar r}$, we have shown metastability.

\begin{theorem}\label{thm:main}
  $\mathcal{S}$ converges uniformly metastably iff $\mathcal{S}$ converges abstractly $\alpha$-uniformly for some $\alpha<\omega_1$.
\end{theorem}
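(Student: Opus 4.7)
My plan is to observe that the theorem reduces to the claim: $\mathcal{S}$ converges uniformly metastably iff $T_{\mathcal{S}}$ is well-founded. Indeed, a well-founded tree on $\mathbb{N}^{<\omega}$ has countable rank, so its height is $<\alpha$ for some $\alpha<\omega_1$; conversely, any tree of height $<\alpha<\omega_1$ is well-founded. I will prove each direction of the reduced claim by contrapositive, exploiting the correspondence $\bar r\mapsto F_{\bar r}$ between strictly increasing sequences and admissible metastability-style functions.

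For the direction ``abstract $\alpha$-uniform for some $\alpha$ $\Rightarrow$ uniformly metastable,'' I will assume $\mathcal{S}$ is not uniformly metastably convergent and manufacture an infinite branch in $T_{\mathcal{S}}$. Fix a witness $F$ (with $n<F(n)$ and $F(n)\le F(n+1)$) together with, for each $K$, a sequence $\bar a_K\in\mathcal{S}$ such that no $m\le K$ makes $\bar a_K$ constant on $[m,F(m)]$. Setting $r_i=F^i(0)$ and $j(K)=\max\{i:r_i\le K\}$, each candidate $m=r_i$ with $i\le j(K)$ is a failed metastability point for $\bar a_K$, so $\bar a_K$ oscillates on $[r_i,F(r_i)]=[r_i,r_{i+1}]$; hence $(r_1,\dots,r_{j(K)+1})\in T_{\mathcal{S}}$. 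Since $j(K)\to\infty$, every finite initial segment of the sequence $r_1<r_2<\cdots$ lies in $T_{\mathcal{S}}$, producing an infinite branch and contradicting well-foundedness.

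For the reverse direction, I will assume $T_{\mathcal{S}}$ has an infinite branch $0=r_0<r_1<r_2<\cdots$, with some $\bar a_k\in\mathcal{S}$ witnessing $(r_1,\dots,r_k)\in T_{\mathcal{S}}$, and then build an $F$ refuting metastability. The natural candidate is $F=F_{\bar r}$, which is non-decreasing with $F(n)>n$ and hence admissible. The verification I need is that whenever $i$ is the least index with $m\le r_i$, the interval $[m,F(m)]=[m,r_{i+1}]$ contains $[r_i,r_{i+1}]$; so whenever $i<k$, the oscillation of $\bar a_k$ on $[r_i,r_{i+1}]$ passes to $[m,F(m)]$. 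Choosing any $k$ with $r_{k-1}\ge M_F$ then makes every $m\le M_F$ a non-witness for $\bar a_k$, contradicting the proposed metastable bound $M_F$.

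The main obstacle is purely organizational: packaging $F_{\bar r}$ so that the inclusion $[r_i,r_{i+1}]\subseteq[m,F_{\bar r}(m)]$ is transparent, and bookkeeping the index $j(K)$ in the forward direction so that arbitrarily long initial segments of the candidate branch are genuinely witnessed. Once the correspondence between increasing sequences and admissible $F$'s is in hand, no further ingredients beyond a direct well-foundedness argument are required.
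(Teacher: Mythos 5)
Your proof is correct and takes essentially the same approach as the paper: both directions pass through the equivalence between uniform metastability and well-foundedness of $T_{\mathcal{S}}$, with the correspondence $\bar r \leftrightarrow F_{\bar r}$ doing the work in each direction. The only minor difference is that in the forward direction you evaluate $F$ directly at the iterates $F^i(0)$, whereas the paper routes the same observation through the step function $\hat F$; this is a harmless simplification.
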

\begin{proof}
  Suppose $\mathcal{S}$ does not converge abstractly $\alpha$-uniformly for any $\alpha<\omega_1$---that is, suppose the tree $T_{\mathcal{S}}$ is ill-founded.  Then it has an infinite path $\bar r$, and the function $F_{\bar r}$ witnesses a failure of uniformly metastable convergence: by the definition of $T_{\mathcal{S}}$, for every $M$ there is a $\bar a\in\mathcal{S}$ so that, for each $m\leq M$, letting $i$ be least so $m\leq r_i$, there are $n_0,n_1\in [r_i,r_{i+1}]\subseteq [m,F_{\bar r}(m)]$ with $a_{n_0}=0$ and $a_{n_1}=1$.  Therefore the uniform bound $M_F$ cannot exist.

  Conversely, suppose $F$ witnesses a failure of uniformly metastable convergence, so also $\hat F$ witnesses a failure of uniformly metastable convergence.  Then for each $K$, there is a sequence $\bar a$ so that for each $k< K$ there are $n_0,n_1\in[k,\hat F(k)]$ so that $a_{n_0}=0$ and $a_{n_1}=1$.  In particular, taking $K=F^M(0)$, we may find $\bar a$ so that for each $i<M$ there are such $n_0,n_1\in[F^i(0),F^{i+1}(0)]$.  Therefore the sequence $(F^i(0))_{i\in\mathbb{N}}$ is an infinite branch through $T_{\mathcal{S}}$.
\end{proof}

We mention one other notion of uniform convergence, which has been particularly studied \cite{1705.10355,MR3345161,MR3111912,bishop:MR0228655}: bounds on jumps (which, in this context, are essentially the same as bounds on ``oscillations'' or ``upcrossings'' as they are sometimes known in the literature).

\begin{definition}
  $\mathcal{S}$ has \emph{uniformly bounded jumps} if there is a $k$ so that whenever $\bar a\in\mathcal{S}$ and $n_0<\cdots<n_{k}$ is a sequence, there is an $i<k$ so that $a_{n_i}=a_{n_{i+1}}$.
\end{definition}

\begin{theorem}
  $\mathcal{S}$ has uniformly bounded jumps iff $\mathcal{S}$ converges abstractly $\omega$-uniformly.
\end{theorem}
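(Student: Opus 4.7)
\emph{Plan.} By the remark following the definition of abstract $\alpha$-uniform convergence, $\mathcal{S}$ converges abstractly $\omega$-uniformly if and only if there is a uniform finite upper bound on the lengths of sequences in $T_{\mathcal{S}}$. My strategy is to prove the per-sequence equivalence: for each $\bar a \in \mathcal{S}$ and each integer $M \geq 0$, condition (a) ``$\bar a$ witnesses some sequence of length $M$ in $T_{\mathcal{S}}$'' is equivalent to condition (b) ``there exist $n_0 < n_1 < \cdots < n_M$ with $a_{n_i} \neq a_{n_{i+1}}$ for every $i < M$.'' Uniformly bounded jumps with parameter $k$ is precisely the statement that no $\bar a \in \mathcal{S}$ satisfies (b) at $M = k$, so the equivalence turns it directly into the statement that no sequence of length $k$ lies in $T_{\mathcal{S}}$, which via the remark is abstract $\omega$-uniform convergence.

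For (a) $\Rightarrow$ (b), I plan to show the witnessing $\bar a$ has at least $M$ genuine jump positions (indices $j$ with $a_{j-1} \neq a_j$). Since each $[r_i, r_{i+1}]$ contains both values of $\bar a$, the least position $j \in (r_i, r_{i+1}]$ with $a_j \neq a_{r_i}$ is a jump of $\bar a$; because the half-open intervals $(r_i, r_{i+1}]$ are pairwise disjoint, this yields $M$ strictly increasing jump positions of $\bar a$. These divide $\bar a$ into at least $M+1$ maximal constant runs with alternating values, and picking one representative from each of the first $M+1$ runs gives the required alternating subsequence.

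For (b) $\Rightarrow$ (a), I plan to set $r_0 = 0$ and $r_l = n_l$ for $1 \leq l \leq M$. Each interval $[r_{l-1}, r_l]$ then contains both values of $\bar a$: the first interval $[0, n_1]$ contains $n_0$ and $n_1$, and each subsequent $[n_{l-1}, n_l]$ contains $n_{l-1}$ and $n_l$, with distinct values in each case.

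The only technical subtlety is the half-open trick in (a) $\Rightarrow$ (b): because adjacent tree intervals share endpoints, choosing positions in the full closed intervals naively could collapse them or disrupt alternation. Restricting attention to the intervals $(r_i, r_{i+1}]$ forces each to contain a genuine jump of $\bar a$ rather than merely a pair of both-value witnesses, which is what delivers the extra term needed to make the alternating subsequence one longer than the tree branch. Once this observation is in hand, both directions reduce to routine counting.
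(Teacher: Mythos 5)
Your proof is correct and takes essentially the same approach as the paper: both directions translate between nodes of $T_{\mathcal{S}}$ and alternating subsequences of $\bar a$. Your per-sequence packaging and half-open-interval argument are a bit tighter than the paper's (which picks alternately-valued witnesses directly from consecutive closed intervals and, in the converse, uses only every other $n_l$), but the core bookkeeping is the same.
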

\begin{proof}
  Suppose $\mathcal{S}$ does not converge abstractly $\omega$-uniformly, so for each $k$ there is a sequence $0<r^k_0<\cdots<r^k_k$ and a $\bar a\in\mathcal{S}$ so that for each $i<k$ there are $n^i_0,n^i_1\in[r^k_i,r^k_{i+1}]$ with $a_{n^i_0}=0$ and $a_{n^i_1}=1$.  Then the sequence $n^0_0<n^1_1<n^0_2<n^1_3<\cdots$ shows that $\mathcal{S}$ does not have uniformly bounded jumps with bound $k$.  Since this holds for any $k$, $\mathcal{S}$ does not have uniformly bounded jumps.

  Conversely, suppose $\mathcal{S}$ does not have uniformly bounded jumps, so for each $k$ there is an $\bar a$ and a sequence $n_0<\cdots<n_k$ so that $a_{n_i}\neq a_{n_{i+1}}$ for each $i<k$.  Then the sequence $n_1,n_3,n_5,\ldots$ belongs to $T_{\mathcal{S}}$.  Since $T_{\mathcal{S}}$ contains arbitrarily long finite sequences, $T_{\mathcal{S}}$ has height at least $\omega$.
\end{proof}

\section{Ordinal Iterations}

\subsection{Concrete $\alpha$-Uniform Convergence}

Most naturally occuring examples of abstract $\alpha$-uniform convergence satisfy a stronger property.  This stronger property is not quite canonical---we need to fix a family of ordinal notations.

\begin{definition}
  A \emph{fundamental sequence}\footnote{Various definitions of this notion which are not exactly equivalent are found in the literature, but the differences are generally minor.} for a countable ordinal $\alpha>0$ is a sequence of ordinals $\alpha[n]$ for $n\in\mathbb{N}$ such that:
  \begin{itemize}
  \item $\alpha[n]<\alpha$,
  \item $\alpha[n]\leq\alpha[n+1]$, and
  \item for every $\beta<\alpha$, there is an $n$ with $\beta\leq\alpha[n]$.
  \end{itemize}

For convenience, we define $0[n]=0$.
\end{definition}
When $\alpha$ is a successor---$\alpha=\gamma+1$---these conditions imply that $\alpha[n]=\gamma$ for all but finitely many $n$.  When $\alpha$ is a limit ordinal, these conditions imply that $\lim_{n\rightarrow\infty}\alpha[n]=\alpha$.

For small ordinals, there are conventional choices of fundamental sequences, like $\omega[n]=n$, $\omega^2[n]=\omega\cdot n$, $\epsilon_0[n]=\omega_n$ (where $\omega_0=0$ and $\omega_{n+1}=\omega^{\omega_n}$), and so on, arising out of ordinal notation schemes.

For the remainder of the paper, assume we have fixed, for every countable ordinal $\alpha$ we consider, some fundamental sequence.  For convenience, we assume that $(\gamma+1)[n]=\gamma$ for all successor ordinals.

\begin{definition}
  Let $F:\mathbb{N}\rightarrow\mathbb{N}$ be a function with $F(n)> n$ and $F(n)\leq F(n+1)$ for all $n$.  We define the $\alpha$-iteration of $F$ by:
  \begin{itemize}
  \item $F^0(n)=n$,
  \item when $\alpha>0$, $F^\alpha(n)=F^{\alpha[F(n)]}(F(n))$.
  \end{itemize}
\end{definition}
Then $F^1$ is just $F$, $F^k$ is the usual $k$-fold iteration of $F$, $F^\omega(n)=F^{F(0)+1}(0)$ (assuming the conventional fundamental sequence $\omega[n]=n$ for $\omega$), and so on.  Note that the definition of this iteration does depend on the choice of fundamental sequences. 

Note that these functions are not quite increasing in the ordinal: if $\alpha<\beta$ then we have $F^\alpha(n)\leq F^\beta(n)$ for sufficiently large $n$, but not necessarily when $n$ is small.  (For instance, compare $F^{1000}(3)$ to $F^\omega(3)$ for $F$ not growing too quickly.)

When calculating $F^\alpha(n)$, there is a canonical sequence of values and ordinals associated with its computation, given by
\begin{itemize}
\item $r_0=n$, $\beta_0=\alpha$,
\item $r_{i+1}=F(r_i)$, $\beta_{i+1}=\beta_i[r_{i+1}]$.
\end{itemize}
Since the sequence $\beta_0,\beta_1,\ldots$ is strictly decreasing, it terminates at some value $k$ with $\beta_k=0$, and we have
\[F^\alpha(n)=F^{\beta_0}(r_0)=F^{\beta_1}(r_1)=\cdots=F^{\beta_k}(r_k)=r_k.\]

\begin{definition}
  We say $\mathcal{S}$ converges concretely $\alpha$-uniformly if there is a $\beta<\alpha$ so that, for every $F:\mathbb{N}\rightarrow\mathbb{N}$ such that $F(n)> n$ for all $n$, for each $\bar a\in \mathcal{S}$, there is an $m$ with $F(m)\leq F^\beta(0)$ and a $c$ so that, for all $n\in[m,F(m)]$, $a_n=c$.
\end{definition}

\begin{remark}
  Again, in our restricted setting this is only interesting at successor ordinals where concrete $\alpha+1$-uniform convergence means $F^\alpha(0)$ always suffices as a bound.

  With more general sequences, we would say that for each $k$ there is a $\beta<\alpha$ so that, for every $F:\mathbb{N}\rightarrow\mathbb{N}$ such that $F(n)> n$ for all $n$, for each $\bar a\in \mathcal{S}$, there is an $m$ with $F(m)\leq F^\beta(0)$ and a $c$ so that, for all $n,n'\in[m,F(m)]$, $|a_n-a_{n'}|<1/k$.
\end{remark}

\begin{lemma}
  If $\mathcal{S}$ converges concretely $\alpha$-uniformly then $\mathcal{S}$ converges abstractly $\alpha$-uniformly.
\end{lemma}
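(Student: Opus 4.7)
The plan is to attach to each node $\bar r = (r_1, \ldots, r_M) \in T_{\mathcal{S}}$ an ordinal $\gamma(\bar r) < \alpha$, defined by the same fundamental-sequence recursion used to compute $F^\beta(0)$, where $\beta < \alpha$ is the witness to concrete $\alpha$-uniform convergence: set $\beta_0 := \beta$ and $\beta_{i+1} := \beta_i[r_{i+1}]$, and put $\gamma(\bar r) := \beta_M$. Since $\gamma(()) = \beta$, proving $\operatorname{rank}(\bar r) \leq \gamma(\bar r)$ at the root will give that $T_{\mathcal{S}}$ has height at most $\beta$, hence strictly less than $\alpha$.

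The key step, and main obstacle, is the leaf case: if $\gamma(\bar r) = 0$, then $\bar r$ has no children. Suppose instead that $(r_1, \ldots, r_M, r_{M+1}) \in T_{\mathcal{S}}$ is witnessed by some $\bar a$. I pad this to an infinite strictly increasing sequence $\bar r^*$ (setting $r_i := r_{M+1} + (i - M - 1)$ for $i > M+1$) and let $F := F_{\bar r^*}$; then $F(n) > n$, so $F$ is legal for concrete $\alpha$-uniform convergence. Tracing the iteration that computes $F^\beta(0)$ reproduces exactly the sequence $r_0, r_1, \ldots, r_M$ with ordinal tags $\beta_0, \beta_1, \ldots, \beta_M = 0$, so $F^\beta(0) = r_M$. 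Concrete $\alpha$-uniform convergence then supplies $m$ and $c$ with $F(m) \leq r_M$ and $a_n = c$ for $n \in [m, F(m)]$. By the definition of $F_{\bar r^*}$, $F(m) = r_{j+1}$ for the least $j$ with $m \leq r_j$, and $r_{j+1} \leq r_M$ forces $j < M$; but then $[r_j, r_{j+1}] \subseteq [m, F(m)]$, contradicting the property that $\bar a$ takes both values $0$ and $1$ on $[r_j, r_{j+1}]$. The delicate feature here is that $F_{\bar r^*}$ sends every $m \leq r_j$ all the way up to $r_{j+1}$, so the constant interval supplied by concrete convergence is \emph{forced to contain} a full witness interval, rather than merely being contained in one.

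With the leaf case in hand, I finish by transfinite induction on $\gamma$: for every $\bar r \in T_{\mathcal{S}}$ with $\gamma(\bar r) = \gamma$, $\operatorname{rank}(\bar r) \leq \gamma$ (in particular, the subtree at $\bar r$ is well-founded). Any child $\bar r' = (r_1, \ldots, r_M, r_{M+1})$ has $\gamma(\bar r') = \gamma(\bar r)[r_{M+1}] < \gamma(\bar r)$ by the fundamental-sequence property, so by induction $\operatorname{rank}(\bar r') \leq \gamma(\bar r')$. When $\gamma(\bar r)$ is a successor, the convention $(\delta + 1)[n] = \delta$ immediately gives $\operatorname{rank}(\bar r') + 1 \leq \gamma(\bar r)$; when it is a limit, monotonicity of $\gamma(\bar r)[\cdot]$ together with $\sup_n \gamma(\bar r)[n] = \gamma(\bar r)$ gives $\sup_{\bar r'}(\operatorname{rank}(\bar r') + 1) \leq \gamma(\bar r)$, closing the induction and the proof.
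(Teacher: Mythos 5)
Your proof is correct, and its computational heart is the same as the paper's: construct $F = F_{\bar r^*}$ from a branch, trace the iteration of $F^\beta(0)$ down the fundamental sequences, and observe that any interval $[m,F(m)]$ with $F(m) \leq F^\beta(0)$ is forced to contain a full $[r_i,r_{i+1}]$. The packaging is the dual of the paper's. The paper argues contrapositively: it assumes $T_{\mathcal{S}}$ has height $\geq \alpha$ and, for each $\beta<\alpha$, recursively chooses $r_{n+1}$ so that the subtree at $(r_1,\ldots,r_{n+1})$ retains height $\geq\beta_n$, then turns that single branch into a bad $F$ and a bad $\bar a$. You instead attach $\gamma(\bar r) = \beta[r_1,\ldots,r_M]$ to \emph{every} node of $T_{\mathcal{S}}$, prove that $\gamma(\bar r)=0$ forces $\bar r$ to be a leaf via the same $F_{\bar r^*}$ computation, and close with a transfinite induction giving $\operatorname{rank}(\bar r)\leq\gamma(\bar r)$. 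Your direct version avoids the choice-laden branch construction and its subtree-height invariant, at the cost of a rank induction; on balance it is at least as clean. Two small points worth tightening. First, $\gamma(\bar r)=0$ does not preclude $\beta[r_1,\ldots,r_j]=0$ for some $j<M$ (since $0[n]=0$), in which case the iteration for $F^\beta(0)$ terminates at $r_j<r_M$; the stated equality $F^\beta(0)=r_M$ can therefore fail. Your argument survives unchanged because $F(m)\leq r_j$ still yields $l<j\leq M$, but you should state $F^\beta(0)=r_{j_0}$ for the least $j_0\leq M$ with $\beta_{j_0}=0$. Second, the appeal to monotonicity of fundamental sequences in the limit case is superfluous: $\gamma(\bar r')<\gamma(\bar r)$ together with $\gamma(\bar r)$ being a limit already gives $\gamma(\bar r')+1\leq\gamma(\bar r)$, which is all the sup needs. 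This matters because the paper only introduces the monotonicity hypothesis later, for a different lemma, and this lemma should not depend on it.
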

\begin{proof}
Suppose $\mathcal{S}$ fails to converge abstractly $\alpha$-uniformly, so $T_{\mathcal{S}}$ has height $\geq\alpha$ (possibly ill-founded).  For each $\beta<\alpha$, we will construct a function $F$ and find a $\bar a\in\mathcal{S}$ witnessing the failure of strong uniform convergence.

Fix some $\beta<\alpha$.  By induction on $n$, we choose a decreasing sequence of ordinals $\beta_n\leq\beta$ and sequence $(r_i)_{i\leq n}$.  We begin with $r_0=0$ and $\beta_0=\beta$.  Given $\beta_n$, we take $r_{n+1}$ so that the set of sequences in $T_{\mathcal{S}}$ extending $(r_i)_{1\leq i\leq n+1}$ has height $\geq\beta_n$.  We then set $\beta_{n+1}=\beta_n[r_{n+1}]$.  We continue until we reach some $k$ so that $\beta_k=0$.

Chose $\bar a\in\mathcal{S}$ so that, for each $i<k$, there are $n_0,n_1\in[r_i,r_{i+1}]$ with $a_{n_0}=0$ and $a_{n_1}=1$.

We extend the sequence $(r_i)_{1\leq i\leq k}$ arbitrarily (say $r_i=r_k+i$ for $i>k$) and set $F=F_{\bar r}$.  Then the computation sequence for $F^\beta(0)$ is precisely
\[F^{\beta_0}(r_0)=F^{\beta_1}(r_1)=\cdots=F^{\beta_k}(r_k)=r_k.\]
Suppose $F(m)\leq F^\beta(0)=r_k$, so $m\leq r_{k-1}$.  Then, for some $i<k$, we have $[r_i,r_{i+1}]\subseteq [m,F(m)]$, and therefore there are $n_0,n_1\in[m,F(m)]$ with $a_{n_0}=0$ and $a_{n_1}=1$.

Since we can construct some such $F$ for any $\beta<\alpha$, $\mathcal{S}$ is not concretely $\alpha$-uniformly convergent.
\end{proof}

As a syntactic analog to Theorem \ref{thm:main}, we expect that if we prove that $\mathcal{S}$ converges uniformly metastably in some reasonable theory $T$ with proof-theoretic ordinal $\lambda$, then there should be some $\alpha<\lambda$ such that we can prove that $\mathcal{S}$ converges concretely $\alpha$-uniformly.  See \cite{1609.07509} for an explicit example of such an analysis in the context of differential algebra.

\subsection{A Proper Hierarchy}

We show that these notions form a proper hierarchy by constructing, for each $\alpha$, a family $\mathcal{S}_\alpha$ of sequences which are concretely $\alpha$-uniformly convergent but not abstractly $\alpha+1$-uniformly convergent.

\begin{definition}
We define $\alpha[c_1,\ldots,c_m]$ inductively by $\alpha[c_1,\ldots,c_{m+1}]=(\alpha[c_1,\ldots,c_m])[c_{m+1}]$.

  We define $\mathcal{S}_\alpha$ to consist of those sequences $(a_n)_{n\in\mathbb{N}}$ such that whenever $c_1,\ldots,c_k$ is a sequence such that:
  \begin{itemize}
  \item $c_1$ is least so that $a_{c_1}\neq a_0$,
  \item for each $i<k$, $c_{i+1}$ is the smallest value greater than $c_i$ so that $a_{c_{i+1}}\neq a_{c_i}$,
  \end{itemize}
then $\alpha[c_1,\ldots,c_k]\neq 0$.
\end{definition}
Roughly speaking, this measures how many times the sequence changes from being $0$'s to being $1$'s.  For example, $\mathcal{S}_k$ is the sequence which changes at most $k$ times.  $\mathcal{S}_\omega$ is the sequence where, taking $n$ to be the first place where the sequence changes, it changes at most $n$ additional times.  The condition is essentially that if we collect the ``runs'' of consecutive $1$'s or $0$'s, the starting points form at $\alpha$-large set in the sense of \cite{MR606791}.

Note that, for any $\bar a\in\mathcal{S}_\alpha$, the statement that $\bar a$ belongs to $\mathcal{S}_\alpha$ really concerns some maximal finite sequence $c_1,\ldots,c_k$.  However it is convenient to phrase the definition this way---where we also consider initial segments $c_1,\ldots,c_{k'}$ for some $k'<k$---because when $\bar a\not\in\mathcal{S}_\alpha$, the sequence may be infinite, but some finite initial segment is long enough to witness that $\alpha$ reduces to $0$.

\begin{lemma}
If $\{\bar a^i\}_{i\in\mathbb{N}}\subseteq\mathcal{S}_\alpha$ then every limit sequence of $\bar a^i$ is also in $\mathcal{S}_\alpha$.
\end{lemma}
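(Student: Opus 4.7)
The plan is to argue by contrapositive: assume $\bar b \notin \mathcal{S}_\alpha$ and produce some $\bar a^i$ that also fails to lie in $\mathcal{S}_\alpha$. If $\bar b \notin \mathcal{S}_\alpha$, then unwinding the definition yields an initial segment of change points $c_1 < c_2 < \cdots < c_k$ of $\bar b$ for which $\alpha[c_1, \ldots, c_k] = 0$. Since this segment is determined entirely by the values of $\bar b$ on the finite interval $[0, c_k]$, it will suffice to produce a single $i$ for which $\bar a^i$ agrees with $\bar b$ on this interval.

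Let $S$ be the infinite set witnessing that $\bar b$ is a limit sequence of the $\bar a^i$. For each integer $n$ with $0 \leq n \leq c_k$, the set $\{i \in S \mid a^i_n \neq b_n\}$ is finite by definition; taking the union over the finitely many such $n$ produces a finite set $E \subseteq S$. Pick any $i \in S \setminus E$. Then $a^i_n = b_n$ for every $n \in [0, c_k]$.

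For this choice of $i$, the first $k$ change points of $\bar a^i$ are exactly $c_1, \ldots, c_k$: since $\bar a^i$ agrees with $\bar b$ on each $[0, c_j]$, the conditions ``$a^i_{c_1} \neq a^i_0$ with $c_1$ least'' and ``$a^i_{c_{j+1}} \neq a^i_{c_j}$ with $c_{j+1}$ least above $c_j$'' transfer directly from the corresponding equalities for $\bar b$. Hence $c_1, \ldots, c_k$ is a valid initial segment of change points for $\bar a^i$, and since $\alpha[c_1, \ldots, c_k] = 0$, this forces $\bar a^i \notin \mathcal{S}_\alpha$, contradicting the hypothesis that $\bar a^i \in \mathcal{S}_\alpha$.

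The only genuinely delicate point is the observation in the last paragraph: we need that the defining condition of $\mathcal{S}_\alpha$ applies to \emph{initial segments} of change points, not just the maximal finite sequence. This is precisely the flexibility built in by the remark preceding the lemma, and it is what lets a failure witnessed by finitely many positions of $\bar b$ be transferred to $\bar a^i$ for any $i$ that is close enough to $\bar b$ on that finite window.
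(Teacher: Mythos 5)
Your proof is correct and is essentially the paper's argument, merely phrased as a contrapositive: both proofs pick an initial segment $c_1,\ldots,c_k$ of change points for $\bar b$, use the finiteness condition in the definition of a limit sequence to find an $i$ with $a^i_n=b_n$ for all $n\leq c_k$, and transfer the conclusion about $\alpha[c_1,\ldots,c_k]$ between $\bar a^i$ and $\bar b$. Your final remark correctly identifies that the flexibility to use initial segments (rather than only the maximal sequence of change points) is what makes this transfer work.
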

\begin{proof}
  Let $\bar a^i$ be given and consider some limit $\bar b$. witnessed by an infinite set $S$ such that, for all $n$, $\{i\in S\mid a^i_n\neq b_n\}$ is finite.  Take any sequence $c_1,\ldots,c_k$ for $\bar b$ as in the definition of $\mathcal{S}_\alpha$.  Then we may find an $i$ so that, for all $n\leq c_k$, $a^i_n=b_n$.  Then, since $\bar a^i\in\mathcal{S}_\alpha$, we have $\alpha[c_1,\ldots,c_k]\neq 0$ as needed.
\end{proof}

In order to prove the results we need, we need an additional property on our fundamental sequences
\begin{definition}
  Suppose that, for all $\beta\in(0,\alpha]$, we have a fundamental sequence for $\beta$.  We say these sequences are \emph{monotone} if, for each $\beta\leq\alpha$ and any sequences $r_1,\ldots,r_k$ and $r'_1,\ldots,r'_k$ with $r_i\leq r'_i$ for all $i$, $\beta[r_1,\ldots,r_k]\leq\beta[r'_1,\ldots,r'_k]$.
\end{definition}
The usual fundamental sequences on small ordinals are monotone.  With a non-monotone fundamental sequence, we could have $\omega^2[1]=10000$ while $\omega^2[2]=\omega$, and then we would have $\omega^2[1,1]=9999$ while $\omega^2[2,1]=1$; this is the sort of anomaly we need to avoid.

\begin{lemma}
  $\mathcal{S}_\alpha$ is concretely $\alpha+1$-uniformly convergent.
\end{lemma}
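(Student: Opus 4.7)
The plan is to take $\beta=\alpha$ and show that the bound $F^\alpha(0)$ is always sufficient. Given any admissible $F$, I would first write down the canonical computation sequence for $F^\alpha(0)$: set $r_0=0$, $\beta_0=\alpha$, $r_{i+1}=F(r_i)$, $\beta_{i+1}=\beta_i[r_{i+1}]$, and let $k$ be the index where $\beta_k=0$, so $F^\alpha(0)=r_k$. The natural candidate witnesses for $m$ are the points $r_0,\ldots,r_{k-1}$, each of which satisfies $F(r_i)=r_{i+1}\leq r_k=F^\alpha(0)$. So it suffices to show that for any $\bar a\in\mathcal{S}_\alpha$ there is some $i<k$ on which $\bar a$ is constant across $[r_i,r_{i+1}]$.

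I would prove this by contradiction: assume every interval $[r_i,r_{i+1}]$ for $i<k$ contains both a $0$ and a $1$. Then each such interval contains at least one change point, i.e.\ some $n\in(r_i,r_{i+1}]$ with $a_n\neq a_{n-1}$. Letting $c_1<c_2<\cdots$ denote the actual global sequence of change points of $\bar a$ (as in the definition of $\mathcal{S}_\alpha$), the fact that at least $j$ distinct change points have been exhibited inside $[0,r_j]$ (one per interval $[r_0,r_1],\ldots,[r_{j-1},r_j]$) forces $c_j$ to exist and satisfy $c_j\leq r_j$ for $j=1,\ldots,k$. If $\bar a$ happens to have fewer than $k$ change points globally, then some $[r_i,r_{i+1}]$ is already constant and we are done without further argument.

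The finishing blow is monotonicity of the fixed fundamental sequences, which gives
\[\alpha[c_1,\ldots,c_k]\leq\alpha[r_1,\ldots,r_k]=\beta_k=0,\]
contradicting $\bar a\in\mathcal{S}_\alpha$, since membership requires $\alpha[c_1,\ldots,c_k]\neq 0$ on every initial segment of change points.

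The main obstacle I expect is the bookkeeping in the middle step: one must check carefully that the $c_1,\ldots,c_k$ extracted interval-by-interval really form the initial segment of the global change-point sequence, with the correct inequality $c_j\leq r_j$, so that the monotonicity hypothesis applies to the right arguments. Once this alignment is established, the proof reduces to the single inequality above together with the built-in relation $\alpha[r_1,\ldots,r_k]=\beta_k$ coming from the computation sequence of $F^\alpha(0)$.
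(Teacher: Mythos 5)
Your proof is correct and follows essentially the same route as the paper's: pick $\beta=\alpha$, unwind the computation sequence $r_0,\ldots,r_k$ of $F^\alpha(0)$, argue by contradiction that each interval $[r_i,r_{i+1}]$ must contain a change point so $c_i\leq r_i$, and then apply monotonicity of the fundamental sequences to get $\alpha[c_1,\ldots,c_k]\leq\alpha[r_1,\ldots,r_k]=\beta_k=0$, contradicting $\bar a\in\mathcal{S}_\alpha$. The only cosmetic difference is that you name the witnesses $m\in\{r_0,\ldots,r_{k-1}\}$ up front, whereas the paper phrases the contradiction in terms of all $m\leq F^\alpha(0)$; the bookkeeping about change points being one per disjoint half-open interval $(r_{i-1},r_i]$ is exactly the alignment the paper glosses with ``must contain the start of at least one new run.''
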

\begin{proof}
Let $F$ be given and $\bar a\in\mathcal{S}_\alpha$ and suppose towards a contradiction that for each $m\leq F^\alpha(0)$ there are $n_{m_0},n_{m_1}\in[m,F(m)]$ with $a_{n_{m_0}}=0$ and $a_{n_{m_1}}=1$.

Take the computation sequence for $F^\alpha(0)$, where $r_0=0$, $\beta_0=\alpha$, $r_{i+1}=F(r_i)$, and $\beta_{i+1}=\beta_i[r_{i+1}]$.  Let $k$ be least so $\beta_k=0$, so $F^\alpha(0)=r_k$.

Each interval $[r_i,r_{i+1}]$ (including $[0,r_1]$) must contain the start of at least one new run, so taking the sequence $c_1,\ldots,c_k$ corresponding to $\alpha$, we have $c_i\leq r_i$ for all $i$.  Since $0\neq\alpha[c_1,\ldots,c_k]\leq \alpha[r_1,\ldots,r_k]=\beta_k=0$, we have the desired contradiction.
\end{proof}

\begin{lemma}
  $\mathcal{S}_\alpha$ is not abstractly $\alpha$-uniformly convergent.
\end{lemma}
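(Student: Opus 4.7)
My plan is to show directly that $T_{\mathcal{S}_\alpha}$ has height at least $\alpha$, by matching the ordinal $\alpha[r_1,\ldots,r_k]$ to the tree-rank of the node $(r_1,\ldots,r_k)$ in $T_{\mathcal{S}_\alpha}$.

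First I would produce explicit members of $\mathcal{S}_\alpha$ witnessing membership of prescribed sequences in $T_{\mathcal{S}_\alpha}$. For any increasing $r_1<\cdots<r_k$ with $\alpha[r_1,\ldots,r_k]\neq 0$, define $\bar a$ by $a_n=i\bmod 2$ for $n\in[r_i,r_{i+1})$ (taking $r_0=0$, and $a_n=k\bmod 2$ for $n\geq r_k$). Its maximal change sequence is exactly $r_1,\ldots,r_k$; the hypothesis together with the convention $0[n]=0$ forces $\alpha[r_1,\ldots,r_{k'}]\neq 0$ at every initial segment, so $\bar a\in\mathcal{S}_\alpha$. Both values $0$ and $1$ appear at the endpoints of each interval $[r_i,r_{i+1}]$, witnessing $(r_1,\ldots,r_k)\in T_{\mathcal{S}_\alpha}$.

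Next I would prove by transfinite induction on $\gamma$ the following: whenever $(r_1,\ldots,r_k)$ is an increasing sequence with $\alpha[r_1,\ldots,r_k]\geq\gamma$ (using $\alpha[\emptyset]:=\alpha$), that node has rank $\geq\gamma$ in $T_{\mathcal{S}_\alpha}$. The successor case $\gamma=\gamma'+1$ carries the content: since $\gamma'<\alpha[r_1,\ldots,r_k]$, the fundamental-sequence property supplies some $m$ with $(\alpha[r_1,\ldots,r_k])[m]\geq\gamma'$; setting $r_{k+1}=\max(r_k+1,m)$ and invoking monotonicity of the fundamental sequences,
\[
\alpha[r_1,\ldots,r_k,r_{k+1}]=(\alpha[r_1,\ldots,r_k])[r_{k+1}]\geq(\alpha[r_1,\ldots,r_k])[m]\geq\gamma'.
\]
The inductive hypothesis applied to this extended sequence (which lies in $T_{\mathcal{S}_\alpha}$ by the first step) yields rank $\geq\gamma'$ on the child, hence rank $\geq\gamma'+1=\gamma$ on the parent. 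Limit stages follow from rank being a supremum.

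Applying the bound to the empty sequence gives tree-rank $\geq\alpha$, so $T_{\mathcal{S}_\alpha}$ does not have height strictly less than $\alpha$; equivalently, $\mathcal{S}_\alpha$ is not abstractly $\alpha$-uniformly convergent. The main subtlety is the successor step's reliance on monotonicity of the fundamental sequences: enlarging $m$ to $r_{k+1}=\max(r_k+1,m)$ to keep the $r_i$'s strictly increasing must not shrink the resulting ordinal, which is exactly what the \emph{monotone-fundamental-sequences} hypothesis of the paper was put in place to guarantee.
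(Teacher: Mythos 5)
Your approach is essentially the same as the paper's: show that a sequence with $\alpha[r_1,\ldots,r_k]\neq 0$ lies in $T_{\mathcal{S}_\alpha}$ via the explicit "change at exactly $r_1,\ldots,r_k$" construction, and then run a transfinite induction matching the residual ordinal $\alpha[r_1,\ldots,r_k]$ to the tree rank. The paper organizes this as an induction on $\alpha$ itself (with an extra parameter $d$ for the lower bound on $r_1$) and bounds the height of the tree of tails; you induct on a separate ordinal $\gamma$ and bound node ranks. These are cosmetically different but the same idea.

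There is, however, a genuine off-by-one gap in your successor step when $\gamma'=0$, i.e.\ $\gamma=1$. You claim that $\alpha[r_1,\ldots,r_k]\geq 1$ forces the node to have rank $\geq 1$, and justify this by extending to $(r_1,\ldots,r_{k+1})$ and applying the $\gamma'=0$ case of the induction. But your construction only guarantees $\alpha[r_1,\ldots,r_{k+1}]\geq 0$, which does \emph{not} ensure $\alpha[r_1,\ldots,r_{k+1}]\neq 0$, so your ``first step'' does not apply and the extended node need not lie in $T_{\mathcal{S}_\alpha}$ at all. Indeed, this failure is real: if $\alpha[r_1,\ldots,r_k]=1$, then $\alpha[r_1,\ldots,r_{k+1}]=1[r_{k+1}]=0$ for every $r_{k+1}$, so $(r_1,\ldots,r_k)$ is a leaf of $T_{\mathcal{S}_\alpha}$ (any $\bar a\in\mathcal{S}_\alpha$ witnessing a longer sequence would have change points $c_i\leq r_i$, forcing $0\neq\alpha[c_1,\ldots,c_{k+1}]\leq\alpha[r_1,\ldots,r_{k+1}]=0$, a contradiction). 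With the standard leaf-rank-$0$ convention, that node has rank $0$, contradicting your claimed ``rank $\geq 1$.'' The fix is to prove the statement in the form the paper actually needs---namely, that the subtree rooted at a node with $\alpha[r_1,\ldots,r_k]\neq 0$ has \emph{height} $\geq\gamma$ whenever $\alpha[r_1,\ldots,r_k]\geq\gamma$---so that the $\gamma=1$ case is immediate (every node that is in the tree has subtree height $\geq 1$), and in the successor step for $\gamma'\geq 1$ the extended node does satisfy $\alpha[r_1,\ldots,r_{k+1}]\geq\gamma'\geq 1\neq 0$. (Equivalently, one can weaken the hypothesis to $\alpha[r_1,\ldots,r_k]>\gamma$ and rebalance the conclusion.) Finally, a small point: the inequality $(\alpha[r_1,\ldots,r_k])[r_{k+1}]\geq(\alpha[r_1,\ldots,r_k])[m]$ only uses single-index monotonicity $\beta[n]\leq\beta[n+1]$, which is already part of the paper's definition of a fundamental sequence; the multi-index \emph{monotone} hypothesis is needed for the companion lemma (concrete $\alpha+1$-uniform convergence), not here.
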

\begin{proof}
Given a sequence $(r_i)_{i\leq k}$ with $\alpha[r_1,\ldots,r_k]\neq 0$, we may naturally associate a sequence $\bar a\in\mathcal{S}_\alpha$ by taking, for each $n$, the unique $i$ such that $n\in[r_i,r_{i+1})$ (where $r_0=0$) and setting $a_n=i\mod 2$.  This witnesses that any such sequence belongs to $T_{\mathcal{S}_\alpha}$.

So it suffices to show that, for any $\alpha$, the tree of squences $(r_i)_{i\leq k}$ with $\alpha[r_1,\ldots,r_k]\neq 0$ has height $\geq\alpha$.  We need to show slightly more, namely that for any $\alpha$ and any $d$, the tree of sequences $(r_i)_{i\leq k}$ with $d<r_1$ and $\alpha[r_1,\ldots,r_k]\neq 0$ has height $\geq\alpha$.  We proceed by induction on $\alpha$; of course when $\alpha=0$, this tree is empty, and so has height $0$.

Suppose the claim holds for all $\beta<\alpha$ and let $d$ be given.  Fix any $r_1>d$.  The tree of sequences whose first element is $r_1$ and $\alpha[r_1,\ldots,r_k]$ is precisely the tree of sequences $(r_i)_{2\leq i\leq k}$ such that $r_1<r_2$ and $(\alpha[r_1])[r_2,\ldots,r_k]\neq 0$, which has height $\geq\alpha[r_1]$.  Since, $\sup_n \alpha[n]=\alpha$ (even after discarding those $n$ with $n\leq d$), the tree of these sequences has height $\geq\alpha$.
\end{proof}

\subsection{Distinguishing Abstract and Concrete $\alpha$-Uniformity}

Finally, we note that concrete $\alpha$-uniformity really is stronger than abstract $\alpha$-uniformity.  

To illustrate the gap at the $\omega+1$, take any sufficiently fast-growing function $f:\mathbb{N}\rightarrow\mathbb{N}$ (in fact, $f(n)=2n$ suffices).  Consider the family $\mathcal{S}_f$ consisting of:
\begin{itemize}
\item the sequence which is all $0$'s,
\item for infinitely many $n$, the sequence given by
\[a^n_i=\left\{\begin{array}{ll}
1&\text{if }i=n+2j\text{ for some }j<f(n)\\
0&\text{otherwise}
\end{array}\right..\]
\end{itemize}
That is, $\bar a^n$ is the sequence
\[00\cdots 00101010\cdots 010000\cdots\]
where the first $1$ occurs at the $n$-th position, there are $f(n)$ alternations, and then the sequence finishes with infinitely many $0$'s.  By choosing the set of $n$ sufficiently sparsely, we can ensure that if $\bar a,\bar b\in\mathcal{S}_f$ and $a_i=b_i=1$ then $\bar a=\bar b$.  This guarantees that any limit of $\mathcal{S}_f$ is also in $\mathcal{S}_f$.

For any $n$, consider the function $F$ such that $F(i)=n$ for $i<n$ and $F(i)=i+1$ for $i>n$.  Then
\[F^\omega(0)=F^{\omega[n]}(n)=2n.\]
But the sequence $\bar a^n$ has both a $0$ and a $1$ on every interval $[i,F(i)]$ with $F(i)\leq 2n$.

We could address this gap in an individual case by tweaking the definition of concrete $\alpha$-uniformity, either by using a different fundamental sequence for $\omega$, or by allowing concrete $\alpha$-uniformity to use a bound like $F^{\alpha+k}(d)$ for constants $k,d$ that depend on the family $\mathcal{S}_f$ (but not on the function $F$).  But, by choosing $f$ growing very fast, we can still find families $\mathcal{S}_f$ which outpace any fixed modification of this kind.

\printbibliography
\end{document}